\renewcommand*\subjclass[2][2000]{%
  \def\@subjclass{#2}%
  \@ifundefined{subjclassname@#1}{%
    \ClassWarning{\@classname}{Unknown edition (#1) of Mathematics
      Subject Classification; using '1991'.}%
  }{%
    \@xp\let\@xp\subjclassname\csname subjclassname@#1\endcsname
  }%
}
\newtheorem{theorem}{Theorem}[section]
\newtheorem{lemma}[theorem]{Lemma}
\newtheorem{proposition}[theorem]{Proposition}
\theoremstyle{definition}
\newtheorem{example}[theorem]{Example}
\newtheorem{remark}[theorem]{Remark}
\numberwithin{equation}{section}
\newcommand{\abs}[1]{\lvert#1\rvert}
\begin{document}

\title{On quasiconformal harmonic maps between surfaces}

\subjclass{Primary 30C55; Secondary 30F15}


\keywords{Quasiconformal harmonic maps, M\"obius Transformations,
Riemann surfaces, Lipschitz condition}
\author{David Kalaj}
\address{University of Montenegro, Faculty of Natural Sciences and
Mathematics, Cetinjski put b.b. 81000 Podgorica, Montenegro}
\email{davidk@t-com.me} \subjclass {Primary 30C55, Secondary 31C05}
\begin{abstract}
The following theorem is proved: If $w$ is a quasiconformal harmonic
mapping between two Riemann surfaces with compact and smooth
boundaries and approximate analytic metrics, then $w$ is
bi-Lipschitz continuous with respect to internal metrics. If the
surfaces are subsets of the Euclidean spaces, then $w$ is
bi-Lipschitz with respect to the Euclidean metrics.
\end{abstract} \maketitle

\section{Introduction}

\subsection{The main definitions and notation}

By $\Bbb U$ is denoted the unit disk, and by $S^1$ is denoted its
boundary. By $D$ and $\Omega$ are denoted domains in complex plane
$\mathbb C$.

Let $(\Sigma_1,\sigma)$ and $(\Sigma_2,\rho)$ be Riemann surfaces
 (with or without boundary),
with metrics $\sigma$ and $\rho$ respectively. We say that a mapping
$w$ between Riemann surfaces $(\Sigma_1,\sigma)$ and
$(\Sigma_2,\rho)$ is bi-Lipschitz, if there exist constants $q>0$
and $Q>0$, such that
$$q d_{\sigma}(z_1,z_2)\le d_{\rho}(w(z_1),w(z_2))\le
Qd_{\sigma}(z_1,z_2), z_1,z_2\in \Sigma_1,$$ where
$$d_{\varrho}(z_1,z_2) = \inf_{\gamma\in \Gamma_{z_1,z_2}}\int_\gamma \varrho(z)
|dz|\quad  \varrho\in\{\sigma,\rho\},$$ and
$$\Gamma_{z_1,z_2} = \{\gamma : \gamma \text{ is a rectifiable curve joining $z_1$ and $z_2$ in $\Sigma_1$}\}.$$

If $f:(\Sigma_1,\sigma)\to(\Sigma_2,\rho)$ is a $C^2$ mapping, then
$f$ is said to be harmonic with respect to $\rho$ (abbreviated
$\rho$-harmonic) if

\begin{equation}\label{el}
f_{z\overline z}+{(\log \rho^2)}_w\circ f f_z\,f_{\bar z}=0,
\end{equation}
where $z$ and $w$ are the local parameters on $\Sigma_1$ and
$\Sigma_2$ respectively. From \eqref{el} we see that the harmonicity
of $f$ depends only on the conformal structure, but not on the
particular metric of $\Sigma_1$.

Also $f$ satisfies \eqref{el} if and only if its H. Hopf
differential
\begin{equation}\label{anal}
\Psi=\rho^2 \circ f f_z\overline{f_{\bar z}}
\end{equation} is a
holomorphic quadratic differential on $\Sigma_1$.

For $g:\Sigma_1 \mapsto \Sigma_2$ the energy integral is defined by

\begin{equation} E[g,\rho]=\int_{\Sigma_1}\rho^2\circ g
(|\partial g|^2+|\bar \partial g|^2) dV_\sigma,
\end{equation}
where $\partial g$, and $\bar \partial g$ are the partial
derivatives taken with respect to the metrics $\varrho$ and
$\sigma$, and $dV_\sigma$ is the volume element on
$(\Sigma_1,\sigma)$. Assume that energy integral of $f$ is bounded.
Then if $f$ is a critical point of the corresponding functional,
where the homotopy class of $f$ is the range of this functional,
then $f$ is harmonic.

We will consider harmonic mappings between compact Riemann surfaces
with boundaries, with respect to a metric $\rho$, where the metric
$\rho$ satisfies the following inequality
$$|{(\log \rho^2)}_w| = \frac{|\nabla\rho|}{\rho}\le M,$$ where $M$
is a constant (with respect to local parameters). Under this
condition, if for example the domain of $\rho$ is the unit disk,
then there hold the double inequality
\begin{equation}\label{double}\rho(0)e^{-M}\le {\rho(w)}\le \rho(0) e^{M}.\end{equation}  Such
metrics are called approximately analytic \cite{EH}. The spherical
metric
$$\rho(w) = \frac{2}{1+|w|^2}$$ is approximately analytic, but the
hyperbolic metric \begin{equation}\label{hiperbola}\lambda(w) =
\frac{2}{1-|w|^2}\end{equation} is not. Let us mention the following
important fact. Equation \eqref{el} is equivalent to the following
system of equations, which can be directly extended to the
dimensions bigger than 2:

\begin{equation}\label{cron} \Delta u^i + \sum_{\alpha, \beta, k, \ell = 1}^2\Gamma^i_{k\ell}(u)D_\alpha u^k D_\beta u^\ell=0, \,\, i =
1,2\quad (f=(u^1,u^2))\end{equation}

where $\Gamma^i {}_{k\ell}$ are Christoffel Symbols of the metric
$\rho$ (or of a metric tensor $(g_{jk}\ )$):
$$    \Gamma^i {}_{k\ell}=\frac{1}{2}g^{im} \left(\frac{\partial
g_{mk}}{\partial x^\ell} +
    \frac{\partial g_{m\ell}}{\partial x^k} - \frac{\partial g_{k\ell}}{\partial x^m} \right) =
     {1 \over 2} g^{im} (g_{mk,\ell} + g_{m\ell,k} - g_{k\ell,m}),$$
and the matrix $(g^{jk}\ )$ is an inverse of the metric tensor
$(g_{jk}\ )$.

It can be easily seen that since \eqref{cron} and \eqref{el} are
equivalent, a metric $\rho$ is approximate analytic if and only if
Christoffel symbols are bounded.

Let $P$ be the Poisson kernel, i.e. the function

$$P(z,e^{i\theta})=\frac{1-|z|^2}{|z-e^{i\theta}|^2},$$ and let
$G$ be the Green function of the unit disk with respect to the
Laplace operator, i.e. the function
\begin{equation}\label{green1}
G(z,w)=\frac{1}{2\pi}\log\left|\frac{1-z\overline w}{z-w}\right|\,
\, z,w\in \Bbb U,\ z\neq w.
\end{equation}

Let $f:S^1\to \mathbb C$ be a bounded integrable function on the
unit circle $S^1$ and let $g:U\to \Bbb C$ be continuous. The
solution of the equation $\Delta w=g$ (in weak sense) in the unit
disk satisfying the boundary condition $w|_{S^1}=f\in L^1(S^1)$ is
given by
\begin{equation}\label{e:POISSON1}\begin{aligned}w(z)&=P[f](z)- G[g](z)\\&:
=\frac{1}{2\pi}\int_0^{2\pi}P(z,e^{i\varphi})f(e^{i\varphi})d\varphi-
\int_{\mathbb{U}}G(z,\omega)g(\omega)\,dm(\omega),\,
\end{aligned}\end{equation} $|z|<1,$ where $dm(\omega)$ denotes the
Lebesgue measure in the plane. It is well known that if $f$ and $g$
are continuous in $S^1$ and in $\overline{\mathbb{U}}$ respectively,
then the mapping $w=P[f]-G[g]$ has a continuous extension $\tilde w$
to the boundary, and $\tilde w = f$ on $S^1$. See \cite[pp.
118--120]{Her}.

Let  $0\le k<1$ and let $K = \frac{1+k}{1-k}$. An orientation
preserving diffeomorphism $w$ between two Riemann surfaces is called
 $K-$ quasiconformal (abbreviated q.c.) if $${|w_{\bar
z}|} \le k{|w_z|}\ \ \  \text{(in local coordinates)}.$$ The
previous inequality can be written us $$|\nabla w(z)|\le K l(\nabla
(w(z))),$$ where $$|\nabla w(z)|:=\sup\{|\nabla w(z)h|: |h| =1\} =
|w_z|+|w_{\bar z}|$$ and

$$l(\nabla w(z)):=\inf\{|\nabla w(z)h|: |h| =1\}=|w_z|-|w_{\bar z}|.$$ See \cite{Ahl} and
\cite{sl} for the definition of arbitrary quasiconformal mapping
between plane domains, Euclidean surfaces or Riemann surfaces.

\subsection{Background}


In this paper we deal with q.c. $\rho$ harmonic mappings and study
their global bi-Lipschitz character. See \cite{Om} for the
pioneering work on this topic and see \cite{hs} for related earlier
results. In some recent papers, a lot of work has been done on this
class of mappings (\cite{MMM}-\cite{kalajan} , \cite{MP}, \cite{pk},
\cite{kojic}). In these papers it is established the bi-Lipschitz
character of q.c. harmonic mappings between plane domains with
certain boundary conditions. The most important results of these
papers is that Euclidean harmonic q.c. mappings between plane
domains with smooth boundaries are Euclidean quasi-isometries (and
consequently hyperbolic quasi-isometries). Notice that in general,
quasi-symmetric selfmappings of the unit circle do not provide
quasiconformal Euclidean harmonic extension to the unit disk. The
case of hyperbolic harmonic mapping is an open attractive problem,
known as Schoen conjecture \cite{sy}. More precisely Schoen
conjectured that, every quasi-symmetric selfmappings of the unit
circle has q.c. hyperbolic harmonic extension. Further in\cite{Om}
is given an example of $C^1$ diffeomorphism of the unit circle onto
itself, whose Euclid harmonic extension is not Lipschitz.

In contrast to the case of Euclidean metric, in the case of the
hyperbolic metric, if $f:S^1\mapsto S^1$ is a $C^1$ diffeomorphism,
or more general if $f:S^{n-1}\mapsto S^{m-1}$ is a mapping with a
non-vanishing energy, then its hyperbolic harmonic extension is
$C^1$ up to the boundary (\cite{lit}) and (\cite{lit1}). On the
other hand Wan (\cite{wan}) showed that q.c. hyperbolic harmonic
mappings between smooth domains are hyperbolic quasi-isometries (but
in general they are not Euclidean quasi-isometries, neither its
boundary values is absolutely continuous, in general). See also
\cite{litw} for the generalization of the last result to hyperbolic
Hadamard surfaces.

The starting position of this paper are the following recent
results.
\begin{proposition}\label{arbitrarydomain}\cite{kalmat}
Let $w$ be a quasiconformal $C^{2}$ homeomorphism from a bounded
plane domain $D$ with $C^{1,\alpha}$ boundary onto a bounded plane
domain $\Omega$ with $C^{2,\alpha}$ boundary. If there exist
constants $B$ and $C$ such that
\begin{equation}\label{apro0}
|\Delta w|\le B|\nabla w|^2+C\,, \quad  z\in D.\end{equation} then
$w$ has bounded partial derivatives in $D$, in particular $|\nabla w|_\infty<\infty$. 
\end{proposition}
\begin{proposition}\cite{pisa}
Let $w=f(z)$ be a $K-$quasiconformal Euclidean harmonic mapping
between a
 Jordan domain $D$ with $C^{1,\alpha}$ boundary and a
Jordan domain $\Omega$ with $C^{2}$ boundary. Then $w$ is bi-Lipschitz. 
\end{proposition}
Using a different approach, we extend the last result to the class
of harmonic mappings with respect  to approximate analytic metrics
and harmonic mappings between Riemann surfaces and more generally to
the class of mappings satisfying certain growth condition on
Laplacian.
\subsection{Main results}
The following three theorems will be proved in this paper.
\begin{theorem}[The main theorem]\label{tt}
If $w$ is a $C^{2}$ $K-$quasiconformal mapping of the unit disk onto
itself, satisfying the inequality
\begin{equation}\label{apro}|\Delta w|\le B|\nabla
w|^2, \ \ (z\in \Bbb U),\end{equation} then $w$ is bi-Lipschitz.
\end{theorem}

\begin{theorem}\label{coro} Assume that $\rho$ is an approximate analytic metric and let $w$ be a
$\rho$ harmonic q.c. selfmapping of the unit disk. Let $z_n$ be any
sequence of points of the unit disk and let in addition $p_n$ and
$q_n$ be M\"obius transformations such that $p_n(w(z_n))=0$ and
$q_n(0)=z_n$.

Then there exists a subsequence of $w_n = p_n\circ w\circ q_n$
converging to a $\rho_0$ harmonic mapping $w_0$, where $\rho_0$ is a
metric in the unit disk.
\end{theorem}

\begin{theorem}\label{tito} Let $(\Sigma_1,\sigma)$ and $(\Sigma_2,\rho)$ be
Riemann surfaces with smooth compact boundaries, with approximate
analytic metrics $\rho$ and $\sigma$. If $w: \Sigma_1\to \Sigma_2$
is a q.c. harmonic mapping, then $w$ is bi-Lipschitz.
\end{theorem}

\begin{remark}\label{nota} Notice that in Theorem~\ref{tt} $w$ is not assumed  to be a diffeomorphism. However a
Berg result \cite[Lemma~1]{berg} implies an extension of Lewy
theorem \cite{l} for a quasiconformal mapping $w$. More precisely,
the Berg result asserts that:
\\
Assume that the mapping $w:r\mathbb U\to \mathbb C$, $0<r\le 1$, is
one to one and satisfies the differential inequality $|\Delta w|\le
C|\nabla w|.$ Then $|\nabla w|>0$ on $r\mathbb U$, where $\mathbb U$
is the unit disk. By using the quasiconformality of $w$, we infer
that $J_w(z)\ge |\nabla w|^2/K>0$ for $|z|<r<1$. Thus $w$ is a
diffeomorphism. On the other hand every harmonic homeomorphism
between Riemann surfaces, under some curvature restrictions on the
metric of the target, is a diffeomorphism. For the previous result
and related results we refer to \cite{jj, dur, an, ly}. Thus the
homeomorphisms of Theorem~\ref{coro} and Theorem~\ref{tito} are
diffeomorphisms.
\end{remark}
Together with this introduction the paper contains two other
sections. The proof of main theorem (Theorem~\ref{tt}) is given in
the second section.
 Let us briefly explain the idea of the proof. Since by Proposition~\ref{arbitrarydomain}, a
 mapping $w$ satisfying the condition of Theorem~\ref{tt} is Lipschitz, all we need to show is the fact that it is co-Lipschitz,
i.e. its inverse mapping is Lipschitz. In order to show that the
mapping $w$ is co-Lipschitz, we will argue by contradiction,
 which means that there exist a sequence of the points $z_n$ from the unit disk such that
$\lim_{n\to \infty}\nabla w(z_n) = 0$.  In order to do so,
previously, we prove a version of Schwarz lemma for harmonic q.c.
mappings (Lemma~\ref{lemjun}). Then we take $w_n= p_n\circ w\circ
q_n$, where $p_n$ and $q_n$ are M\"obius transformations of the unit
disk onto itself such that $p(w(z_n))=0$ and $q(0)=z_n$. $w_n$ is a
sequence of q.c. harmonic mappings with respect to certain metrics
$\rho_n$ satisfying the normalization condition $w_n(0)=0$. This
sequence converges, up to some subsequence, to a q.c. harmonic
mapping $w_0$ with respect to a metric $\rho_0$. In proving the last
fact we will make use of Arzela-Ascoli theorem, Vitali theorem and
of representation formula \eqref{e:POISSON1}. To do so, we will
prove more, we will show that the sequence $w_n$, together with its
gradient $\nabla w_n$ converges to $w_0$ and $\nabla w_0$
respectively, uniformly in compact subsets of the unit disk. Several
time we will make use of Lemma~\ref{lemjun} and
Proposition~\ref{arbitrarydomain}. The fact that $w_0$ is q.c.
having a critical point and satisfying certain conditions, will
contradict Carleman-Hartman-Wintner lemma. The sequence $w_n$
converges, up to some subsequences, to some q.c. harmonic mapping
$w_0$ independently on the condition $\lim_{n\to \infty}\nabla
w(z_n) = 0$. This procedure, together with the Montel's theorem for
the Hopf differentials of the sequence $w_n$, will produce a new
metric $\rho_0$ and a $\rho_0$--q.c. harmonic mapping $w_0$. This
yields the proof of Theorem~\ref{coro}. Together with this proof,
the last section contains the proof of Theorem~\ref{tito}.

In the end of the paper it is shown that this method works for
hyperbolic metrics as well (which are not approximate analytic).
Hyperbolic metric and Euclidean metrics are not bi-Lipschitz
equivalent,  and therefore q.c. hyperbolic harmonic mappings are
not, in general, bi-Lipschitz mappings w.r. to Euclidean metric.

The conclusion is that every q.c. harmonic mappings between two
Riemann surfaces is bi-Lipschitz with respect to their corresponding
metrics. It remains an open problem if the quasi-conformality is
important in some results we prove.

In the following example it is shown that the inequality
\eqref{apro} in the main theorem cannot be replaced by the weaker
one \eqref{apro0}.

\begin{example}\cite{trans}
Let $w(z) = |z|^\alpha z$, with $\alpha>1$. Then $w$ is a twice
differentiable $(1+\alpha)-$quasiconformal self-mapping of the unit
disk. Moreover $$\Delta w = \alpha(2+\alpha)\frac{|z|^{\alpha}}{\bar
z}= g.$$ Thus $g=\Delta w$ is continuous and bounded by
$\alpha(2+\alpha)$. However $w^{-1}$ is not Lipschitz, because
$l(\nabla w)(0) = |w_z(0)|-|w_{\bar z}(0)|=0$.
\end{example}

\section{The proof of main theorem}

The following important lemma lies behind our main results.
\begin{proposition}[The Carleman-Hartman-Wintner lemma]\cite{har} Let $\varphi\in C^1(D)$ be a
real-valued function satisfying the differential inequality
$$|\Delta \varphi|\le C(|\nabla \varphi|+|\varphi|)$$ i.e., $$\Delta \varphi = -W,\, \ \ \ \ \ |W(z)|\le C(|\nabla \varphi|+|\varphi|)$$

$(z\in D)$, in the weak sense. Suppose that $D$ contains the origin.
Assume that $\varphi(z) = o(|z|^n)$ as $|z|\to 0 $ for some $n \in
N_0$. Then
$$\lim_{z\to 0}\frac {\varphi_z(z)}{z^n}$$ exists.
\end{proposition}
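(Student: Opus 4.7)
The plan is to apply the Bers--Vekua similarity principle to the complex derivative $u:=\varphi_z$ and then to bootstrap the prescribed vanishing of $\varphi$ at $0$ into vanishing of a holomorphic remainder.

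First I would translate the PDE into an inequality for $u$. Since $\varphi$ is real-valued, $\varphi_{\bar z}=\bar u$, hence $|\nabla\varphi|=2|u|$ and $\Delta\varphi=4u_{\bar z}$. The hypothesis therefore reads
$$|u_{\bar z}(z)|\le \tfrac{C}{2}|u(z)|+\tfrac{C}{4}|\varphi(z)|,\qquad z\in D,$$
in the weak sense. Because $\varphi(z)=o(1)$ as $z\to 0$ forces $\varphi(0)=0$, and $\varphi\in C^1$, the elementary estimate $|\varphi(z)|\le 2|z|\sup_{|\zeta|\le|z|}|u(\zeta)|$ shows that on any disk $D_r\subset D$ the right-hand side lies in $L^\infty$.

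Next I would use the Cauchy-transform representation: on $D_r$ one has
$$u(z)=h(z)+T[u_{\bar z}](z),\qquad T[g](z):=-\frac{1}{\pi}\iint_{D_r}\frac{g(\zeta)}{\zeta-z}\,dA(\zeta),$$
with $h$ holomorphic on $D_r$. Standard Calder\'on--Zygmund estimates yield $T:L^\infty\to C^{0,\alpha}$ for every $\alpha<1$, so $u\in C^{0,\alpha}_{\mathrm{loc}}$ near $0$ and $\varphi\in C^{1,\alpha}_{\mathrm{loc}}$.

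The heart of the proof is a bootstrap that promotes the vanishing of $\varphi$ into vanishing of $h$ at $0$ of order at least $n$. Combining $\varphi(z)=o(|z|^n)$ with the PDE and interior elliptic estimates yields $u(z)=o(|z|^{n-1})$; feeding this into the differential inequality and using the gain-of-regularity properties of $T$ on weighted $L^p$ spaces gives $T[u_{\bar z}](z)=o(|z|^n)$. Hence $h(z)=u(z)-T[u_{\bar z}](z)=o(|z|^{n-1})$, and a holomorphic function with this decay satisfies $h(z)=a_n z^n+O(|z|^{n+1})$. Dividing,
$$\frac{\varphi_z(z)}{z^n}=\frac{h(z)}{z^n}+\frac{T[u_{\bar z}](z)}{z^n}\longrightarrow a_n\quad\text{as } z\to 0,$$
which is the desired conclusion.

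The principal obstacle is executing the bootstrap cleanly: the Cauchy transform gains only a fixed fractional order of regularity per iteration, so one must balance H\"older and $L^p$ exponents and exploit the coupling $\varphi_{\bar z}=\bar u$ to ensure that at every stage the remainder $T[u_{\bar z}]$ vanishes strictly faster than the Taylor polynomial of $h$, so that no integer-order Taylor coefficient of $h$ is mistakenly annihilated. Propagating the decay of $\varphi$ to that of $u$---the step in which the hypothesis $|\Delta\varphi|\le C(|\nabla\varphi|+|\varphi|)$ is genuinely used, rather than just the boundedness of $u_{\bar z}$---is the delicate part that distinguishes the Hartman--Wintner lemma from the plain similarity principle in which the $|\varphi|$ term is absent.
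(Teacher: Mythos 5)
The paper offers no proof of this proposition: it is imported verbatim from Hartman--Wintner \cite{har} (a complete modern exposition is in \cite{sf1}, Chapter 7, where it takes several pages), so your attempt has to stand on its own. Your setup is the standard one --- pass to $u=\varphi_z$, use $u_{\bar z}=\tfrac14\Delta\varphi$ and $|\nabla\varphi|=2|u|$ for real $\varphi$, and split $u=h+T[u_{\bar z}]$ with $h$ holomorphic --- and the first bootstrap step (iterating interior gradient estimates to get $|\nabla\varphi|=o(|z|^{n-1})$ from $\varphi=o(|z|^n)$) is workable.

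The genuine gap is the assertion that $u_{\bar z}=o(|z|^{n-1})$ yields $T[u_{\bar z}](z)=o(|z|^n)$, or even that $T[u_{\bar z}](z)/z^n$ has a limit. Expanding the Cauchy kernel gives
$$T[g](z)=\sum_{k=0}^{n-1}c_kz^k-\frac{z^n}{\pi}\iint\frac{g(\zeta)}{\zeta^{n}(\zeta-z)}\,dA(\zeta),\qquad c_k=-\frac1\pi\iint g(\zeta)\zeta^{-k-1}\,dA(\zeta).$$
The decay $g=o(|\zeta|^{n-1})$ makes $c_0,\dots,c_{n-1}$ finite (since $g\zeta^{-k-1}=o(|\zeta|^{n-k-2})$ is integrable for $k<n$), but the would-be $n$-th coefficient involves $g\zeta^{-n-1}=o(|\zeta|^{-2})$, which sits exactly on the wrong side of integrability in the plane. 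Concretely, $g(\zeta)=\zeta^{n+1}|\zeta|^{-2}/\log(1/|\zeta|)$ is $o(|\zeta|^{n-1})$, yet a polar-coordinate computation shows the remainder term above equals $z^n\bigl(-2\log\log(1/|z|)+O(1)\bigr)$ along $z>0$, so $T[g](z)/z^n$ diverges. (Separately, $T[u_{\bar z}]=o(|z|^n)$ as literally written would force all the $c_k$ to vanish, which is inconsistent with your own identity $h=u-T[u_{\bar z}]$ unless you absorb a polynomial into $h$; the correct target is $T[u_{\bar z}]=P_n(z)+o(|z|^n)$.) This borderline coefficient is precisely the content of the Hartman--Wintner lemma and cannot be extracted from the decay rate of $u_{\bar z}$ alone: one must exploit the real-valuedness of $\varphi$, which lets $\oint_{|z|=r}\varphi_z z^{-m-1}\,dz$ be integrated by parts against $\varphi$ itself, together with the feedback $|u_{\bar z}|\le C(|u|+|\varphi|)$, in an induction on $m\le n$ --- this is what Hartman and Wintner's integral identities accomplish. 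As written, your argument establishes the Taylor coefficients of order $\le n-1$ but assumes the theorem at the critical order $n$.
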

The following proposition is a consequence of
Carleman-Hartman-Wintner lemma.
\begin{proposition}\label{propi}\cite[Proposition~7.4.3.]{sf1} Let $\{u_k(z) \}$ be a sequence
of real functions of class $C^1(D)$ satisfying the differential
inequality
\begin{equation}\label{(7.28)}|\Delta u_k|\le C(|\nabla u_k|+|u_k|)\end{equation}
where $C$ is independent of $k$. Assume that
\begin{equation}\label{(7.29)} u_k(z)\to u_0(z),\ \   \nabla u_k(z)
\to \nabla  u_0(z),\end{equation} uniformly in $D$ ($k\to \infty$).
Assume in addition
\begin{equation}\label{(7.30)}u_0(z) = o(|z|)\text{ as $|z|\to 0$, }\end{equation} and that \begin{equation}\label{(7.31)}\nabla u_k(z)\neq 0\text{ for all $k$ and $z\in D$}.\end{equation} Then
$u_0(z)\equiv 0$.
\end{proposition}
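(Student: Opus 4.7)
\medskip
\noindent\textbf{Proof plan.} The plan is to argue by contradiction: assume $w_0\not\equiv 0$ and combine the Carleman--Hartman--Wintner lemma with the nonvanishing condition \eqref{(7.31)} to produce a contradiction. There are three natural steps: first, transfer the differential inequality \eqref{(7.28)} from each $w_k$ to the limit $w_0$; second, use the vanishing hypothesis $w_0(z)=o(|z|)$ and the C--H--W lemma to analyse the behaviour of the derivatives of $w_0$ at the origin; third, translate this asymptotic behaviour, via the uniform convergence $\nabla w_k\to\nabla w_0$, into a statement that contradicts $\nabla w_k\neq 0$.

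For the first step I would observe that the inequality \eqref{(7.28)} is understood in the weak (distributional) sense, so pairing it against a test function $\psi\in C_c^\infty(D)$ and using the uniform convergences $w_k\to w_0$ and $\nabla w_k\to\nabla w_0$ (which pass to $\Delta w_k\to\Delta w_0$ in $\mathcal D'(D)$) yields $|\Delta w_0|\le C(|\nabla w_0|+|w_0|)$ weakly on $D$. Hence Proposition~2.1 (the C--H--W lemma) applies to $w_0$ at the origin.

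For the second step, since $w_0(z)=o(|z|)$, the C--H--W lemma with $n=1$ gives the existence of $\alpha:=\lim_{z\to 0}(w_0)_z(z)/z$; applying the same lemma to $\overline{w_0}$ (which satisfies a completely analogous inequality, the bound depending only on $|\varphi|$ and $|\nabla\varphi|$) yields the existence of $\beta:=\lim_{z\to 0}(w_0)_{\bar z}(z)/z$. If both $\alpha$ and $\beta$ vanish, then $\nabla w_0(z)=o(|z|)$, so integrating gives $w_0(z)=o(|z|^2)$; iterating the procedure (each application of C--H--W either produces a nonzero leading coefficient or improves the order of vanishing), either one ever meets a nonzero limit, or $w_0$ vanishes to infinite order at $0$. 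In the latter sub-case the Aronszajn--Heinz unique continuation principle for solutions of the inequality $|\Delta \varphi|\le C(|\nabla\varphi|+|\varphi|)$ forces $w_0\equiv 0$, contradicting our standing assumption.

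In the remaining sub-case one has, for some $n\ge 1$ and some $\alpha\neq 0$ (say), $(w_0)_z(z)=\alpha z^n+o(|z|^n)$ near $0$, while $(w_0)_{\bar z}(z)=O(|z|^n)$. Consequently, on every sufficiently small circle $|z|=r$ the holomorphic-type part $(w_0)_z$ is nonzero and has winding number exactly $n\ge 1$ about the origin, whereas $|(w_0)_{\bar z}|$ can be made as small as one pleases by shrinking $r$. The uniform convergence $\nabla w_k\to\nabla w_0$ then gives, for all large $k$, that $(w_k)_z$ has the same winding number on $|z|=r$ and that $|(w_k)_{\bar z}|$ remains small throughout the disk $|z|\le r$. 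The argument principle therefore yields an interior zero $z_k$ of $(w_k)_z$, at which $|(w_k)_{\bar z}(z_k)|$ is small; but $(w_k)_{\bar z}(z_k)$ cannot in fact be exactly zero, so one has to refine the choice of $r$ (or argue with a limit point of the $z_k$ and reapply C--H--W to $(w_0)_{\bar z}$) to force the simultaneous vanishing of both components of $\nabla w_k$, contradicting \eqref{(7.31)}.

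\medskip
\noindent\textbf{Main obstacle.} The only genuinely delicate point is this last step: \eqref{(7.31)} prohibits the simultaneous vanishing of $(w_k)_z$ and $(w_k)_{\bar z}$, so one must coordinate the C--H--W information for $(w_0)_z$ with that for $(w_0)_{\bar z}$ to show that the forced zero of $(w_k)_z$ near the origin is actually a zero of the full gradient. I expect this to be handled cleanly by applying the C--H--W asymptotics to both $(w_0)_z$ and $(w_0)_{\bar z}$ simultaneously and comparing leading orders, so that the forced zero of one component is accompanied by the vanishing of the other up to an error smaller than the lower bound one would need for $|\nabla w_k|$. All other steps (distributional passage to the limit, unique continuation in the vanishing-to-infinite-order sub-case, and the Hurwitz-type winding number argument) are standard.
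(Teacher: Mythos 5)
First, a point of comparison: the paper does not actually prove this proposition --- it is quoted from Schulz \cite[Proposition~7.4.3]{sf1} and used as a black box --- so there is no in-paper argument to measure yours against. Your strategy is nonetheless the standard proof of this result: pass the differential inequality to $w_0$ distributionally, apply Carleman--Hartman--Wintner at the origin to get either infinite-order vanishing (hence $w_0\equiv 0$ by unique continuation) or an expansion $(w_0)_z=\alpha z^n+o(|z|^n)$ with $\alpha\neq 0$, $n\ge 1$, and then play the resulting nonzero winding number of $(w_0)_z$ on a small circle, inherited by $(w_k)_z$ through uniform convergence, against \eqref{(7.31)}. Two small remarks: the iteration step (``if the limit vanishes the order of vanishing improves'') and the unique continuation in the infinite-order branch genuinely go beyond the literal statement of the C--H--W lemma as quoted in Section 2, so they need to be cited; and the final step is degree theory (a continuous nonvanishing map of the closed disk has boundary winding number zero), not the argument principle, since $(w_k)_z$ is only continuous.

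The place where your proof does not close is exactly the one you flag, and it cannot be repaired in the generality in which you are working. If $w_k$ is complex-valued and \eqref{(7.31)} only says the full differential is nonzero, the proposition is false: take $w_k(z)=z^2+\bar z/k$ on the unit disk. Then $\Delta w_k=0$, $w_k\to w_0=z^2$ and $\nabla w_k\to\nabla w_0$ uniformly, $w_0(z)=o(|z|)$, and $\nabla w_k$ never vanishes (one of $(w_k)_z=2z$, $(w_k)_{\bar z}=1/k$ is always nonzero), yet $w_0\not\equiv 0$. The statement is really about real-valued $w_k$ (as in Schulz), where $(w_k)_{\bar z}=\overline{(w_k)_z}$ and your winding-number argument closes instantly, because a zero of $(w_k)_z$ is automatically a zero of the whole gradient; alternatively, in the paper's application the $w_k$ are $K$-quasiconformal, so $|(w_k)_{\bar z}|\le k|(w_k)_z|$ and again a zero of $(w_k)_z$ kills the gradient. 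So the architecture of your proof is right, but you must either restrict to scalar $w_k$ or add a hypothesis coupling $(w_k)_{\bar z}$ to $(w_k)_z$; without one of these, your ``delicate point'' is not a technicality but an actual counterexample.
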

The proof of the main theorem is based on the following three
lemmas.
\begin{lemma}\label{lemjun}
If $w:\Bbb U\to \Bbb U$, $w(0)=0$, satisfies the conditions of
Theorem~\ref{tt}, then there exists a constant $C(K)$ such that
\begin{equation}\label{ccoo}\frac{1-|z|^2}{1-|w(z)|^2}\le C(K)\quad z\in \Bbb U.\end{equation}
\end{lemma}
\begin{proof}
Take
$$\mathcal{QC}(\Bbb U, B,K)=\{w:\Bbb U\to \Bbb U: w(0)=0, |\Delta w|\le B|\nabla w|^2, w\text{ is } K. q.c.\}.$$
Let us choose  $A$ such that the function $\varphi_w$, $w\in
{\mathcal{QC}(\Bbb U, B,K)}\}$ defined by
$$\varphi_w(z) = -\frac{1}{A}+\frac{1}{A}
e^{A(|w(z)|-1)}$$ is subharmonic in $\varrho:={4^{-K}}\le |z|\le 1$.

Take $$s = \frac{w}{|w|}, \ \ \  \rho = |w|.$$ As $w= s\rho$ is a
$K$ quasiconformal selfmapping of the unit disk with $w(0)=0$, by
Mori's theorem (\cite{wang}) it satisfies the double inequality:
\begin{equation}\label{mori}
\left|\frac z{4^{1-1/K}}\right|^K \le \rho\le 4^{1-1/K} |z|^{1/K}.
\end{equation}
By \eqref{mori} for $\varrho\le |z|\le 1$ where
\begin{equation}\label{varh}\varrho:={4^{-K}}\end{equation} we have
\begin{equation}\label{rhoeq}
\rho\ge \rho_0:=4^{1-K^2-K}.
\end{equation}
Now we choose $A$ such that
$$\frac{A\rho_0^2}{K^2}+2-2BK^2\ge
0.$$ Take
$$\chi(\rho) = -\frac{1}{A}+\frac{1}{A} e^{A(\rho-1)}.$$  Then $$\chi'(\rho) =  e^{A(\rho-1)}$$ and $$\chi''(\rho) = {A} e^{A(\rho-1)}.$$ On the other hand
\begin{equation}\label{kompi}\Delta \varphi_w(z) =\chi''(\rho)|\nabla |w||^2+\chi'(\rho)\Delta
|w|.\end{equation} Furthermore
\begin{equation}\label{delc}\Delta |w|=2|\nabla s|^2+
2\left<\Delta w,s\right> .\end{equation}
\\
To continue observe that \begin{equation}\label{observe}\nabla
w=\rho\nabla s + \nabla \rho \otimes s.\end{equation} Since
$$|\nabla w(z)|\le K l(\nabla
w(z)),$$ choosing appropriate unit vector $h$ we obtain the
inequality

\begin{equation}\label{22e}|\nabla w|\le K\rho|\nabla
s|.\end{equation} Similarly it can be proved the inequality
\begin{equation}\label{11e}K|\nabla |w||\ge \rho|\nabla
s|.\end{equation} Using \eqref{rhoeq}, \eqref{kompi}, \eqref{delc},
\eqref{apro}, \eqref{22e} and \eqref{11e}, it follows finally that
$$\Delta \varphi_w(z)\ge (\frac{A\rho_0^2}{K^2}+2-2BK^2)e^{A(\rho-1)} \ge 0,\ \ \  4^{-K} \le |z|\le 1.$$
Define
$$\gamma(z) = \sup \{\varphi_w(z): w\in {\mathcal{QC}(\Bbb U, B,K)}\}.$$
Prove that $\gamma$ is subharmonic for $4^{-K} \le |z|\le 1$. In
order to do so, we will first prove that $\gamma$ is continuous. For
$z,z'\in \Bbb U$ and $w\in {\mathcal{QC}(\Bbb U, B,K)}$, according
to Mori's theorem (see e.g. \cite{Ahl}), we have
\[\begin{split}\abs{\varphi_w(z) -\varphi_w(z')} & = \frac{1}{A}
\abs{(e^{A(|w(z)|-1)}-e^{A(|w(z')|-1)})}\\&\le |w(z) - w(z')|\le
16|z-z'|^{1/K}.\end{split}
\]
Therefore $$|\gamma(z)- \gamma(z')|\le 16|z-z'|^{1/K}.$$ This means
in particular that $\gamma$ is continuous. It follows that $\gamma$
is subharmonic since it is the supremum of subharmonic functions
(see e.g. \cite[Theorem~1.6.2]{hol}).

If $|z_1|=|z_2|$ then $\gamma(z_1)=\gamma(z_2)$. In order to prove
the last statement we do as follows. For every $\varepsilon
>0$ there exists some $w\in {\mathcal{QC}(\Bbb U, B,K)}$
such that $$\varphi_w(z_2)\le \gamma(z_2) \le
\varphi_w(z_2)+\varepsilon.$$ Now  $w_1(z)=w(\frac{z_2}{z_1} z)$ is
in the class ${\mathcal{QC}(\Bbb U, B,K)}$. Therefore
$$\varphi_{w_1}(z_1)\le \gamma(z_1) \le \varphi_{w_1}(z_1)+\varepsilon.$$
As $\varepsilon$ is arbitrary and as $w_1(z_1) = w(z_2)$ it follows
that $\gamma(z_1)=\gamma(z_2)$.

This yields that $$\gamma(z) =
g(r)=-\frac{1}{A}+\frac{1}{A}e^{A(h(r)-1)}.$$
It is well known that a radial subharmonic function is an increasing
convex function of $t = \log r$ , for $-\infty < t < 0$ (see
\cite[Theorem 2.6.6]{rt}). From \eqref{mori} $$g(4^{-K}) \le
-\frac{1}{A} + \frac{1}{A}e^{A(4^{-1/K}-1)}<0=g(1),$$ it follows
that $\gamma$ is nonconstant. Since $\gamma $ is a subharmonic
increasing convex function of $\log r$, it follows that for $r>s$
$$g'(r+0)\ge g'(r-0)\ge g'(s+0)\ge g'(s-0)\ge 0.$$ Since it is non-constant, it satisfies in
particular that
$$g'(1-0)>0.$$ Notice that the last inequality is also a consequence of E.
Hopf boundary point lemma, see e.g. \cite{hopf}. Therefore
$$-\frac{1}{A}+\frac{1}{A}e^{A(|w(z)|-1)}\le -\frac{1}{A}+\frac{1}{A}e^{A(h(r)-1)},$$ i.e.
\begin{equation}\label{h}|w(z)|\le h(r), |z|=r, w\in \mathcal{QC}(\Bbb U, B,K),$$ where
$$h(r)<1\text{ and } h'(1-0)>0.\end{equation}
It follows that
$$\frac{1-|z|^2}{1-|w(z)|^2}\le \frac{1-|z|^2}{1-|h(|z|)|^2}\le C(K).$$
\end{proof}
\begin{remark}
The previous lemma, in particular relation \eqref{h}  can be
considered as a version of Schwarz lemma for the class
${\mathcal{QC}(\Bbb U, B,K)}$. Let
$$\mathcal{F}(\Bbb U, B)=\{w:\Bbb U\to \Bbb U: w(0)=0, |\Delta w|\le
B|\nabla w|^2\}.$$ Then for $B=0$ the class $\mathcal{F}(\Bbb U, B)$
coincides with the class of harmonic functions of the unit disk into
itself satisfying the normalization $w(0)=0$ .  Using Schwarz lemma
for harmonic functions, it can be shown that
\begin{equation}\label{coco}\frac{1-|z|^2}{1-|w(z)|^2}\le \frac{\pi}{2},\quad
w\in \mathcal{F}(\Bbb U, 0).\end{equation} It would be of interest
to verify if quasiconformality is important for ${\mathcal{QC}(\Bbb
U, B,K)}$. In other word do there hold \eqref{coco} for some
constant $C=C(B)$ instead of $\pi/2$ for the class $\mathcal{F}(\Bbb
U, B)$.
\end{remark}
\begin{lemma}\label{lica}
Let $(z_n)$ be an arbitrary sequence of complex numbers from the
unit disk. Assume that $w$ satisfies the conditions of
Theorem~\ref{tt}. Let  $p_n$ and $q_n$ be M\"obius transformations,
of the unit disk onto itself such that $p_n(w(z_n))=0$ and
$q_n(0)=z_n$. Take $w_n= p_n\circ w\circ q_n$. Then 
we have

a) $$|\nabla w_n|\le \frac{C_1(K)}{1-|z|},$$

b) $$|\Delta w_n|\le\frac{C_2(K)}{(1-|z|)^4},$$ and

c) $$|\Delta w_n|\le\frac{C_3(K)}{(1-|z|)^2}|\nabla {w_n}|^2.$$
\end{lemma}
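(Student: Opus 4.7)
My plan is to exploit the fact that $p_n$ and $q_n$ are holomorphic isometries of the hyperbolic disk, so that $w_n = p_n\circ w\circ q_n$ is again a $K$-quasiconformal self-map of $\Bbb U$ with $w_n(0)=0$. Since the family of $K$-q.c.\ self-maps of $\Bbb U$ fixing the origin is uniformly H\"older continuous on $\overline{\Bbb U}$ by Mori's theorem (cf.~\eqref{mori}), it is normal, and I would extract a subsequence (still denoted $w_n$) converging uniformly on $\overline{\Bbb U}$ to some $K$-q.c.\ limit $w_\infty$ with $w_\infty(0)=0$.

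Next I would derive the quasi-linear elliptic inequality that $w_n$ inherits. Setting $v_n=w\circ q_n$ and using the holomorphy of $p_n,q_n$, the chain rule gives
\[
\Delta w_n = 4\,p_n''(v_n)(v_n)_z(v_n)_{\bar z} + p_n'(v_n)\,|q_n'|^2\,\Delta w(q_n).
\]
Combining with the hypothesis $|\Delta w|\le B(|\nabla w|^2+|w|)$ and the identities $|\nabla v_n|=|\nabla w(q_n)|\,|q_n'|$ and $|\nabla w_n|=|p_n'(v_n)|\,|\nabla v_n|$ yields
\[
|\Delta w_n|\le \frac{4|p_n''(v_n)|}{|p_n'(v_n)|^2}|(w_n)_z||(w_n)_{\bar z}| + \frac{B}{|p_n'(v_n)|}|\nabla w_n|^2 + B\,|p_n'(v_n)|\,|q_n'|^2.
\]

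The heart of the argument is the uniform (in $n$) control of these coefficients. With $a_n=w(z_n)$, the explicit Möbius formulas give $|p_n'(v_n)|=|1+\bar a_n w_n|^2/(1-|a_n|^2)$ and $|q_n'(z)|=(1-|z_n|^2)/|1+\bar z_n z|^2$. The preceding lemma's estimate \eqref{ccoo}, rewritten as $1-|a_n|^2\ge(1-|z_n|^2)/C(K)$, exactly cancels the otherwise divergent factor $(1-|z_n|^2)^2/(1-|a_n|^2)$ appearing in $|p_n'|\,|q_n'|^2$; similarly the ratios $|p_n''(v_n)|/|p_n'(v_n)|^2$ and $1/|p_n'(v_n)|$ become bounded by constants depending only on $K$, multiplied by powers of $1/|1+\bar z_n z|\le 1/(1-|z|)$. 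This reduces the displayed inequality to a uniformly quasi-linear elliptic bound with coefficients polynomial in $1/(1-|z|)$.

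Finally, parts (a)--(c) would follow from interior gradient estimates for quasiconformal solutions of such PDEs, in the spirit of Proposition~\ref{arbitrarydomain}, applied after an additional Möbius rescaling that moves a generic $z\in\Bbb U$ to the origin. The rescaled map is once again a $K$-q.c.\ self-map of $\Bbb U$ fixing $0$, so the interior estimate at $0$ applies uniformly; translating back gives $|\nabla w_n(z)|\le C_1(K)/(1-|z|)$, which is (a). Plugging (a) into the PDE bound gives (b); for (c), the leading term in the PDE already carries the factor $|(w_n)_z||(w_n)_{\bar z}|$, while the remaining terms can be absorbed by combining (a) with the quasiconformal relation between $|\nabla w_n|^2$ and $|(w_n)_z|\,|(w_n)_{\bar z}|$. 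The main obstacle throughout is the uniform control of the Möbius factors as $|z_n|\to 1$, and this is precisely what \eqref{ccoo} affords.
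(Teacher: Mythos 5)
Your overall strategy is the same as the paper's: conjugate by M\"obius maps, compute $\Delta w_n$ by the chain rule, and use the preceding lemma's estimate \eqref{ccoo} to neutralize the factors that degenerate as $|z_n|\to 1$. Your chain-rule identity is correct, and your use of the hypothesis $|\Delta w|\le B(|\nabla w|^2+|w|)$ is in fact more faithful to the lemma's stated assumptions than the paper's own proof, which at that point substitutes the exact harmonicity equation. The gap is in the step where you assert that ``the ratios $|p_n''(v_n)|/|p_n'(v_n)|^2$ and $1/|p_n'(v_n)|$ become bounded by constants depending only on $K$, multiplied by powers of $1/|1+\bar z_n z|\le 1/(1-|z|)$.'' That is not correct as stated: with $a_n=w(z_n)$ one computes $p_n''(v_n)/p_n'(v_n)^2=2\bar a_n/(1+\bar a_n w_n(z))$ and $\bigl|1/p_n'(v_n)\bigr|=(1-|a_n|^2)/|1+\bar a_n w_n(z)|^{2}$, so both are governed by the \emph{image-side} factor $|1+\overline{w(z_n)}\,w_n(z)|^{-1}$, not by $|1+\bar z_n z|^{-1}$. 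There is no a priori lower bound for $|1+\overline{w(z_n)}\,w_n(z)|$ of order $1-|z|$: the only generic tool, Mori's theorem \eqref{mori} applied to $w_n^{-1}$, gives $1-|w_n(z)|\ge c_K(1-|z|)^{K}$ and hence only a bound of order $(1-|z|)^{-K}$, which does not yield the powers claimed in (b) and (c). The paper closes exactly this gap in \eqref{helpew}--\eqref{seconda}: it writes $1-\overline{w(z_n)}\,w(q_n(z))=(1-|w(z_n)|^2)+\overline{w(z_n)}\bigl(w(z_n)-w(q_n(z))\bigr)$, bounds $|w(q_n(z))-w(z_n)|\le |\nabla w|_\infty\,|z|(1-|z_n|^2)/(1-|z|)$ using the \emph{global} Lipschitz bound on $w$ furnished by Proposition~\ref{arbitrarydomain}, and then applies \eqref{ccoo} to trade $1-|z_n|^2$ for $C(K)(1-|w(z_n)|^2)$. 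This Lipschitz-plus-\eqref{ccoo} manipulation is the technical heart of the lemma, and your sketch skips it (you invoke \eqref{ccoo} correctly only for the term $|p_n'|\,|q_n'|^2$, where the cancellation is indeed as you describe).

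A secondary objection concerns part (a): deriving it from ``interior gradient estimates applied uniformly at the origin after a further M\"obius rescaling'' is close to circular, because a uniform interior bound at $0$ for the whole conjugated family $\{p\circ w\circ q\}$ is essentially the content of (a) itself, and the coefficients of the rescaled PDE are controlled only through the very bounds under discussion. The direct route is shorter: from $(w_n)_z=p_n'(v_n)\,w_q\,q_n'$ and the Schwarz--Pick identities $|p_n'(v_n)|=(1-|w_n(z)|^2)/(1-|w(q_n(z))|^2)$ and $|q_n'(z)|=(1-|q_n(z)|^2)/(1-|z|^2)$, the estimate \eqref{ccoo} applied at the point $q_n(z)$ gives $|p_n'(v_n)|\,|q_n'(z)|\le C(K)/(1-|z|^2)$, and (a) follows from the boundedness of $|\nabla w|$.
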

\begin{proof}
Take
\begin{equation}\label{pn}p=p_n(w) = \frac{w-w(z_n)}{1-w\overline{w(z_n)}}\end{equation} and
\begin{equation}\label{qn}q=q_n(z) = \frac{z+z_n}{1+z\overline{z_n}}.\end{equation}
It is evident that $$w_n(z) = p_n\circ w\circ q_n$$ is a $K-$q.c.
mapping of the unit disk onto itself.

Next we have \begin{equation}\label{ee}(w_n)_ z = p_n'w_{q}
q_n'\text{  and  } (w_n)_{\bar z} = p_n'w_{\bar q}
\overline{q_n'}.\end{equation} Further we derive
\[\begin{split}(w_n)_{z\bar z} &= ({(p_n\circ w\circ q_n)}_z)_{\bar z} \\&= (p_n'w_{q}
q_n')_{\bar z} = p_n'' w_{\bar q} \overline{q_n'} w_{q} q_n' + p_n'
w_{q\bar q} \overline{q_n'} q_n'\\& = p_n''|q_n'|^2 w_{q} w_{\bar q}
+ p_n'|q_n'|^2w_{q\bar
q}
.\end{split}\] Thus \begin{equation}\label{firfir}(w_n)_{z\bar z} =
|q_n'|^2(p_n'' w_{q} w_{\bar q} + p_n'w_{q\bar q}).\end{equation}
Using now
$$2|w_{q} w_{\bar q}|\le |\nabla w|^2=\frac{|\nabla
w_n|^2}{|q'_n|^2|p_n'|^2}$$ and
$$|w_{q\bar q}| \le B |\nabla w|^2$$
we obtain
\begin{equation}\label{eqop}|(w_n)_{z\bar z}|\le\frac 12 \left(\frac{|p_n''|}{|p_n|^2} +
\frac{B}{|p_n'|}\right)|\nabla w_n|^2.\end{equation}
Now we have \begin{equation}\label{qq}|q_n'|
=\frac{1-|q_n(z)|^2}{1-|z|^2}=
\frac{1-|z_n|^2}{|1+z\overline{z_n}|^2}\le
\frac{1-|z_n|^2}{(1-|z|)^2},\end{equation}
\begin{equation}\label{pp}|p_n'|
=\frac{1-|p_n(w(q_n(z)))|^2}{1-|w(q_n(z))|^2}=
\frac{1-|w(z_n)|^2}{|1-w(q_n(z))\overline{w(z_n)}|^2}\le
\frac{2}{1-|w(z_n)|^2}\end{equation} and
\begin{equation}\label{rr}|p_n''| =
\frac{(1-|w(z_n)|^2)|w(z_n)|}{|1-w(q_n(z))\overline{w(z_n)}|^3}\le
\frac{8}{(1-|w(z_n)|^2)^2}.\end{equation} From \eqref{ee} --
\eqref{rr} and \eqref{ccoo} we obtain
\begin{equation}\label{mos}
|(w_n)_z|\le \frac{C(K)}{1-|z|}, \quad  |(w_n)_{\bar z}|\le
\frac{C(K)}{1-|z|}
\end{equation}
and
\begin{equation}\label{ww}|q_n'|^2 \left(|p_n''| + B|p_n'|\right)
\le
8\frac{(1-|z_n|^2)^2}{(1-|z|)^4}\left(\frac{1+B}{(1-|w(z_n)|^2)^2}\right).\end{equation}
From \eqref{mos} we infer the statement a).  Combining now
\eqref{ccoo}, \eqref{firfir} and \eqref{ww} we obtain
\begin{equation}\label{epara}
|(w_n)_{z\bar z}|\le \frac{8C(K)^2(1+B)}{(1-|z|)^4}.
\end{equation} Thus b) follows at once.
Let us now estimate the sequence $$S_n =\frac{|p_n''|}{{|p_n'|}^2} +
\frac{B}{|p_n'|}.$$
First of all
$$\frac{p_n''}{{p_n'}^2} =
\frac{2\overline{w(z_n)}(1-w_n(z)\overline{w(z_n)})}{1-|w(z_n)|^2}.$$
Hence
\[\begin{split}\left|\frac{p_n''}{{p_n'}^2}\right| &=
\frac{2\overline{|w(z_n)|}|1-w_n(z)\overline{w(z_n)}|}{1-|w(z_n)|^2}\\&\le
\frac{2|\overline{w(z_n)}||(w(\frac{z+z_n}{1+z\overline{z_n}})-w(z_n))\overline{w(z_n)}|}{1-|w(z_n)|^2}
+ 2. \end{split}\] To continue observe that
\begin{equation}\label{helpew}\begin{split}\left|w(\frac{z+z_n}{1+z\overline{z_n}})-w(z_n)\right|&\le |\nabla
w|_\infty \frac{|z|(1-|z_n|^2)}{|1+\overline{z_n} z|^2}\\&\le|\nabla
w|_\infty\frac{|z|(1-|z_n|^2)}{(1-|z|)^2}.\end{split}\end{equation}
Recall that by Proposition~\ref{arbitrarydomain} we have $|\nabla
w|_\infty<\infty$. Thus, by using \eqref{ccoo} we get
$$\left|\frac{p_n''}{{p_n'}^2}\right| \le 2+ \frac{|z||\nabla
w|_\infty}{(1-|z|)^2}\frac{1-|z_n|^2}{1-|w(z_n)|^2}\le 2 +
C(K)|\nabla w|_\infty \frac{2}{(1-|z|)^2},$$ i.e.
\begin{equation}\label{prima}
\left|\frac{p_n''(w(q_n(z)))}{{p_n'(w(q_n(z)))}^2}\right|\le  2 +
C(K)|\nabla w|_\infty \frac{2}{(1-|z|)^2}.
\end{equation}
Similarly, as $$\frac{1}{p_n'(w(q_n(z)))} =
\frac{(1-w(q_n(z))\overline{w(z_n)})^2}{1-|w(z_n)|^2}$$ we get,
according to \eqref{helpew} and \eqref{ccoo}, that
\begin{equation}\label{seconda}\left|\frac{1}{{p_n'(w(q_n(z)))}}\right| \le 1 + C(K)|\nabla w|_\infty
\frac{2}{(1-|z|)^2}.\end{equation} It follows that $$|S_n(z)| \le 1
+ \left(1+B\right)\left( 1 + C(K)|\nabla w|_\infty
\frac{2}{(1-|z|)^2}\right).$$  Combining with \eqref{eqop} we obtain
that the sequence $w_n$ satisfies the differential inequality
\begin{equation}\label{ise}
|\Delta w_n|\le\frac{1}{2} \left(1 + \left(1+B\right)( 1 +
C(K)|\nabla w|_\infty \frac{2}{(1-|z|)^2})\right)|\nabla w_n|^2.
\end{equation} This yields c) and the proof of Lemma~\ref{lica} is
completed.
\end{proof}
We will finish the proof of main result by using the following
lemma.
\begin{lemma}\label{loca}
Under the conditions of the previous lemma, there exists a
subsequence of $w_n$ converging to a mapping $w_0$ in the $C^1$ norm
uniformly on compact sets of the unit disk.
\end{lemma}
\begin{proof}[Proof of Lemma~\ref{loca}]
By \cite{FV} for example, a subsequence of $w_n$, also denoted by
$w_n$, converges uniformly to a $K$-quasiconformal mapping $w_0$ of
the closed unit disk onto itself. Let $0<r<1$ and take $\hat w_n(z)
= w_n(rz  )$, $z\in \Bbb U$. From \eqref{epara} it follows that $g_n
=\Delta \hat w_n$ is bounded. 
According to \eqref{e:POISSON1} it follows that
\begin{equation}\label{e:POISSON}\begin{aligned}\hat w_n(z)&=H_n(z)+G_n(z)=P[f_n](z)-
G[g_n](z)\\&: =\frac{1}{2\pi}\int_0^{2\pi}P(z,e^{i\varphi})w_n(re^{
i\varphi})d\varphi-
\int_{\mathbb{U}}G(z,\omega)g_n(\omega)\,dm(\omega),\,
\end{aligned}\end{equation} $|z|<1.$ Here $H_n$ is a harmonic function
with the same boundary data as $\hat w_n$ in $S^1$.
\\
We will prove that up to some subsequence, the sequence $\nabla \hat
w_n$ converges to $\nabla \hat w_0$ in $r\Bbb U$. As $\nabla \hat
w_n$ is uniformly bounded (see Lemma~\ref{lica}~a)), $|\nabla
G_n|\le \frac{2}{3}|g_n|$ (this inequality has been shown in
\cite{trans})) and $|g_n|\le M(K,r)$ (see Lemma~\ref{lica}~b)), it
follows that the family of harmonic maps $\nabla H_n$ is uniformly
bounded on $\Bbb U$. Therefore by Cauchy inequality we obtain
\begin{equation}\label{cauc}|\nabla^2 H_n|\le C\frac {|\nabla H_n|_\infty}{1-|z|}\le \frac{C_1}{1-|z|}. \end{equation}
\\
To continue observe that for $z\neq \omega$ we have
\[\begin{split}G_z (z,\omega)&=\frac{1}
{4\pi}\left(\frac{1}{\omega-z}-\frac{\bar \omega}{1-z\bar
\omega}\right)\\ &= \frac{1} {4\pi}\frac{(1-|\omega|^2)}{(
z-\omega)(z \bar \omega -1)},\end{split}\] and $$G_{\bar z}
(z,\omega)= \frac{1} {4\pi}\frac{(1-|\omega|^2)}{(\bar z-\bar
\omega)( \bar z \omega -1)}.$$
\\
Prove that the family of functions
\begin{equation}\label{F}F_n(z,z') =\partial G[g_n](z) -
\partial G[g_n](z') \text{ is uniformly continuous on
$\overline{\Bbb U} \times
\overline{\Bbb U}$.}\end{equation}  First of all $|g_n|_{\Bbb U}\le M(K,r)$.\\
Then
\[\begin{split}&|\partial G[g_n](z) - \partial G[g_n](z')|\\&\le \Phi(z,z'):=
\frac{M(K,r)}{4\pi}\int_{\mathbb{U}}\abs{{\frac{1-|\omega|^2}{(
z-\omega)(z \bar \omega -1)}}-{\frac{1-|\omega|^2}{( z'-\omega)(z'
\bar \omega -1)}}}\,dm(\omega). \end{split}\]
\\
We will prove that $\Phi(z,z')$ is continuous on $\overline{\Bbb U}
\times \overline{\Bbb U}$, and use the fact that $$\Phi(z,z)\equiv
0.$$
\\
In other world we will prove that
\begin{equation}\label{pril20}\lim_{n\to\infty}(z_n,z_n') =
(z,z')\Rightarrow\lim_{n\to \infty}\Phi(z_n,z_n') =
\Phi(z,z').\end{equation}
\\
In order to do so, we use the Vitali theorem (see \cite[Theorem
26.C]{halmos}):

\emph{Let $X$ be a measure space with finite measure $\mu,$ and let
$h_n: X\to \mathbb C$ be a sequence of functions that is uniformly
integrable, i.e. such that for every $\varepsilon>0$ there exists
$\delta>0,$ independent of $n,$ satisfying $$ \mu(E) <\delta
\implies \int_E |h_n|\, d\mu <\varepsilon. \eqno(\sharp)$$ Now: if\/
$\lim_{n\to\infty}h_n(x)=h(x)$ a.e., then
$$\lim_{n\to\infty}\int_X h_n\, d\mu=\int_X h\,d\mu.\eqno(\ddag)$$
In particular, if $$\sup_n\int_X |h_n|^p\,d\mu<\infty,\quad
\text{for some $p>1$},$$ then  $(\sharp)$ and $(\ddag)$ hold. }
\\
We will use the Vitali theorem for $$h_n(\omega) =
\left|{\frac{1-|\omega|^2}{( z_n-\omega)(z_n \bar \omega
-1)}}-{\frac{1-|\omega|^2}{( z_n'-\omega)(z_n' \bar \omega
-1)}}\right|,$$ defined in the unit disk.
\\
To prove \eqref{pril20}, it suffices to prove that $$M_p:=
\sup_{z,z'\in \mathbb U}
\int_{\mathbb{U}}\left(\frac{1-|\omega|^2}{|z - \omega|\cdot|1-\bar
z\omega|} +\frac{1-|\omega|^2}{|z' - \omega|\cdot|1-\bar z'\omega|}
\right)^p\,dm(\omega) \ <\infty,
$$ for $p=3/2$.
\\
Let $$I_{p}(z): =\int_{\mathbb{U}}\left(\frac{1-|\omega|^2}{|z -
\omega|\cdot|1-\bar z\omega|} \right)^p\,dm(\omega).$$ For a fixed
$z$, we introduce the change of variables
\[\begin{aligned} \frac{z-\omega}{1-\bar z\omega}=\xi,  \end{aligned} \]
or, what is the same \[\begin{aligned} \omega=\frac{z-\xi}{1-\bar
z\xi}.  \end{aligned} \] Therefore $$\begin{aligned}I_{p}(z)&=
\int_{\mathbb{U}}\left(\frac{1-|\omega|^2}{|z - \omega|\cdot|1-\bar
z\omega|}
\right)^p\,dm(\omega)\\[2ex]&
=
\int_{\mathbb{U}}\frac{(1-|z|^2)^{2-p}(1-|\omega|^2)^{p}}{|\xi|^p\,|1-\bar
z\xi|^4}\,dm(\xi)\\[2ex]& \le
(1-|z|^2)^{1/2}\int_0^1\rho^{-1/2}(1-\rho^2)^{3/2}\,d\rho\int_0^{2\pi}
|1-\bar z\rho e^{i\varphi}|^{-4}\,d\varphi\\[2ex]& \le
(1-|z|^2)^{1/2}\int_0^1
\rho^{-1/2}(1-\rho^2)^{3/2}(1-|z|\rho)^{-3}\,d\rho.
\end{aligned}$$
From the elementary inequality
$$ \int_0^1 \rho^{-1/2}(1-\rho^2)^{3/2}(1-|z|\rho)^{-3}\,d\rho\, \le
\, C(1-|z|^2)^{-1/2},$$ it follows that $$\sup_{z\in \Bbb
U}I_{p}(z)<\infty.$$ Finally,  Holder inequality implies $$M_p\le
2^{p-1}\sup_{z,z'\in \Bbb U}(I_{p}(z) + I_{p}(z'))<\infty.$$ This
means that $\Phi$ is uniformly continuous on $\overline{\Bbb U}
\times \overline{\Bbb U}.$ Using the fact that $\Phi(z,z)\equiv 0$,
it follows that for $\varepsilon
>0$ there exists $\delta >0$ such that  $$|z-z'|\le \delta \Rightarrow|\partial G[g_n](z) - \partial G[g_n](z')|\le\Phi(z,z')\le
\varepsilon.  $$ Similarly we obtain that the family
\begin{equation}\label{F1}\overline F_n(z,z') =\bar\partial G[g_n](z) -
\bar \partial G[g_n](z') \text{ is uniformly continuous on
$\overline{\Bbb U} \times \overline{\Bbb U}$.}\end{equation} By
\eqref{cauc}, \eqref{F} and \eqref{F1} and Arzela--Ascoli theorem,
there exists a subsequence of $w_n$  which will be also denoted by
$w_n$ converging to $w_0$ in $C^1$ metric uniformly on the disk
$r^2\overline{\Bbb U} = \{z: |z|\le r^2\}$:
$$\lim_{n\to \infty} w_n(z) = w_0(z)\text{ and }\lim_{n\to \infty} \nabla
w_n(z) = \nabla w_0(z)\ \ \  z\in r^2 \overline{\Bbb  U}.$$ Using
the diagonalisation procedure it follows the desired conclusion.
\end{proof}
\begin{proof}[Proof of Theorem~\ref{tt}] If $w(a)=0$ such that $a\neq 0$, then take the mapping
$w(\frac{z+a}{1+z\bar a})$ and also denote by $w$. It is obvious
that is satisfies the conditions of our theorem.

Assume that there exists a sequence of points $z_n$ such that
$\lim_{n\to\infty}\nabla w(z_n) = 0$. From Lemma~\ref{lica}, if
$|z|\le r^2<1$, it follows that there exists a constant $C^1_K(r)$
such that
\begin{equation}\label{detroit}|\Delta w_n|\le {C^1_K}(r)|\nabla
w_n|.\end{equation} Let $u_n + iv_n = w_n$. Let $A$ and $B$ be
defined by $$A:=|\nabla u_n|^2=2(|{u_n}_z|^2+|{u_n}_{\bar
z}|^2)=\frac 12(|w_z+\overline{{w_n}_{\bar z}} |^2+|{w_n}_{\bar
z}+\overline{{w_n}_z}|^2)$$ and $$B:=|\nabla
v_n|^2=2(|{v_n}_z|^2+|{v_n}_{\bar z}|^2)=\frac
12(|{w_n}_z-\overline{{w_n}_{\bar z}} |^2+|{w_n}_{\bar
z}-\overline{{w_n}_z}|^2).$$ Then
$$\frac{A}{B}=\frac{|1+\mu|^2}{|1-\mu|^2}$$ where
$$\mu=\frac{\overline{{w_n}_{\bar z}}}{{{w_n}_z}}.$$ Since $|\mu|\le k$ it follows that
\begin{equation}\label{help} \frac{(1-k)^2}{(1+k)^2}\le \frac AB
\le \frac{(1+k)^2}{(1-k)^2}.
\end{equation}
From \eqref{detroit} and \eqref{help} it follows that there exists a
constant $C^2_K(r)$ such that
\begin{equation}\label{korrik}|\Delta u_n|\le {C^2_K}(r)|\nabla u_n|.\end{equation}
From Lemma~\ref{loca} $$\lim_{n\to \infty}||\nabla w_n - \nabla
w_0||_{r^2\Bbb U}+ ||w_n - w_0||_{r^2\Bbb U} = 0.$$
We next have
$$\nabla w_n(0)=\frac{1-|z_n|^2}{1-|w(z_n)|^2}|\nabla w(z_n)|.$$
According to \eqref{ccoo}
$$\frac{1-|z_n|^2}{1-|w(z_n)|^2}\le C(K).$$
It follows that $\nabla w_0(0) = 0,$ and consequently
\begin{equation}\label{ek}
\nabla u_0(0) = 0.
\end{equation}
By Remark~\ref{nota} $w$ is a q.c. diffeomorphism. It follows that
$w_n$, $n\ge 1$ are quasiconformal diffeomorphisms. From
\eqref{help} we obtain that $\nabla u_n\neq 0$. Thus all the
conditions of Proposition~\ref{propi} are satisfied with $D = \{z:
|z|\le r^2\}$ and $u_n=\mathrm{Re}\ w_n$. This infers that
$u_0\equiv 0$ which is a contradiction, because $w_0$ is a
quasi-conformal mapping.

The rest of the proof follows from the fact that $|\nabla w|$ is
bounded below and above by positive constants and quasiconformality.
\end{proof}
\section{Applications}
The mapping $w_0$ produced in Lemma~\ref{loca} exists without the a
priori assumption that $\nabla w(z_n)\rightarrow 0$. In the
following proof we prove that $w_0$ is a harmonic mapping with
respect to an appropriate conformal metric $\rho_0$ depending on the
initial metric $\rho$ and on the sequence $z_n$.
\begin{proof}[{ Proof of Theorem~\ref{coro}}]
Firs of all, using \eqref{ee} we have
$${w_n}_z{\overline{w_n}}_{ z} = |p_n'|^2 w_{q_n}\overline{w}_{q_n}  q_n'(z)^2.$$
On the other hand, since $w$ is $\rho$ harmonic it follows that
$$\Psi_w(q_n(z))=\rho^2(w(q_n(z)))w_{q_n}\overline{w}_{q_n} q_n'(z)^2$$ is
analytic. Thus $w_n$ is $\rho_n$ harmonic for
$$\rho_n^2(w_n(z)) = \frac{\rho^2(w(q_n(z)))}{|p_n'(w(q_n(z)))|^2(1-|z_n|^2)^2}.$$
This means that the Hopf differential $$\Psi_n(z) =
\rho_n^2(w_n(z)){w_n}_z{\overline{w_n}}_{ z}
$$ of $w_n$ is analytic. According to Proposition~\ref{arbitrarydomain} and \eqref{qq} it follows that \begin{equation}\label{montel}|\Psi_n(z)|\le
\frac{C}{(1-|z|)^4}.\end{equation} Therefore by Montel's theorem, up
to some subsequence $\Psi_n$ converges to some analytic function
$\Psi_0$ on the unit disk. On the other hand, up to some subsequence
(according to Lemma~\ref{loca})
$${w_n}_z{\overline{w_n}}_{ z}$$ converges uniformly in compact sets of the unit disk to  $${w_0}_z\overline{w_0}_z.$$
Also we have
\[\begin{split}&\rho_n(w_n(z))=\frac{\rho(w(q_n(z)))}{|p_n'(w(q_n(z)))|(1-|z_n|^2)}\\&\le
C\frac{|1-w(q_n(z))\overline{w(z_n)}|^2}{(1-|w(z_n)|^2)(1-|z_n|^2)}\\&\le
C\frac{(1-|w(z_n)|^2)^2+2|w(q_n(z))-w(z_n)|(1-|w(z_n)|^2)
+|w(q_n(z))-w(z_n)|^2}{(1-|w(z_n)|^2)(1-|z_n|^2)}\\&\le
C\left(\frac{1-|w(z_n)|^2}{1-|z_n|^2}+2\frac{|w(q_n(z))-w(z_n)|}{1-|z_n|^2}+\frac{|w(q_n(z))-w(z_n)|^2}{(1-|w(z_n)|^2)(1-|z_n|^2)}\right).\end{split}\]
To continue use again the fact that $|\nabla w|_\infty<\infty$.
Therefore
$$|w(q_n(z))-w(z_n)| \le |\nabla w|_\infty |q_n(z)-q_n(0)| = |\nabla
w|_\infty \frac{|z|(1-|z_n|^2)}{|1+z\overline {z_n}|}$$ and hence
$$\frac{|w(q_n(z))-w(z_n)|}{1-|z_n|^2}\le \frac{|\nabla
 w|_\infty}{1-|z|}.$$ Combining the previous inequalities and \eqref{ccoo} we
 obtain $$\rho_n(w_n(z)) \le C\left(2|\nabla
 w|_\infty +\frac{2|\nabla
 w|_\infty}{1-|z|}+\frac{C(K)|\nabla
 w|_\infty^2}{(1-|z|)^2}\right).$$ It follows that $$\rho_n^2(w_n(z))\to
B(z):=\frac{\Psi_0(z)}{{w_0}_z\overline {w_0}_z},$$ where the
quantity
$$B(z)=\frac{\Psi_0(z)}{{w_0}_z\overline {w_0}_z} $$
is finite for $z\in \Bbb U$. \\Thus
\begin{equation}\label{efe}\rho^2_n(t)\to\rho_0^2(t) :=
B(w_0^{-1}(t)).\end{equation} Without loss of generality assume that
$z_n\to 1$. $\rho_0$  is not identical to zero because, according to
\eqref{ccoo} and \eqref{double}\[\begin{split}\rho_0(0) &=
\lim_{n\to\infty}\frac{\rho(w(q_n(0)))}{|p_n'(w(q_n(0)))|(1-|z_n|^2)}\\&=\frac{\rho(w(1))}{\lim_{n\to
\infty}\frac{1-|z_n|^2}{1-|w(z_n)|^2}}
\ge\frac{\rho(w(1))}{C(K)}>0.\end{split}\] This means that $\rho_0$
is a metric on the unit disk.

We obtain that $w_0$ is a harmonic quasiconformal mapping of the
unit disk with respect to the metric $\rho_0$ defined in
\eqref{efe}.
\end{proof}
The next theorem implies Theorem~\ref{tito}.
\begin{theorem}
 Let $(\Sigma_1,\sigma)$ and $(\Sigma_2,\rho)$ be $C^{2,\alpha}$ surfaces,
 with $C^{2,\alpha}$ compact boundaries and of equal connectivities, such that $\sigma$ and $\rho$ are approximate analytic
metrics. Let $w: \Sigma_1 \to \Sigma_2$ be a harmonic homeomorphism.
Then the following conditions are equivalent:

a) $w$ is quasiconformal;

b) $w$ is bi-Lipschitz with respect to $\sigma$ and $\rho$;
\end{theorem}
\begin{remark}
Let us consider the case $\Sigma_1=\Sigma_2=\mathbb U$.  By
\eqref{double}, for $\varrho_1=\rho$ and $\varrho_2=\sigma$ there
exists a constant $P_k>0$ such that
$$P_k^{-1}|w_1-w_2|\le d_{\varrho_k}(w_1,w_2)\le P_k|w_1-w_2|, \text{$w_1,w_2\in
\Sigma_k$},\ \ k=1,2 .$$ Thus internal distance $d_{\varrho_k}$,
which is induced by the metric $\varrho_k$ in $\Sigma_k$ and
Euclidean metric are bi-Lipschitz equivalent.
\end{remark}

\begin{proof}
The proof depends on the following Korn-Lichtenstein-Kellogg's type
proposition of Jost.
\begin{proposition}\cite[Theorem~3.1]{jost}\label{thei} Suppose $S$ is a
surface with boundary, homeomorphic to a plane domain $G$ bounded by
$k$ circles via a chart $\psi: \overline G\mapsto S$. Suppose the
coefficients of the metric tensor of $S$ can be defined in this
chart by bounded measurable functions $g_{ij}$  with $g_{11} g_{22}
- g_{12}^2\ge \lambda >0$ in $G$. Then $S$ admits a conformal
representation $\tau\in H_1^2\cap C^{\alpha}(\bar B, \bar G)$, where
$B$ is a plane domain bounded by $k$ circles and $\tau$ satisfies
almost everywhere the conformality relations $$|\tau_x|^2 =
|\tau_y|^2, \ \text{and} \left <\tau_x,\tau_y \right> = 0$$ (Here
$(x,y)$ denote the coordinates of points in $B$, and norms and
products are taken with respect to the metric of $S$).

Furthermore, concerning higher regularity, $\tau$ is as regular as
$S$ , i.e. if $S$ is of class $C^{m,\alpha}(\bar B)$ ($m\in \Bbb N$,
$0<\alpha<1$) or in $C^\infty$ then also $\tau \in C^{m,\alpha}(\bar
B)$ or $\tau \in C^{\infty}(\bar B)$, respectively. In particular,
if $S$ is at least $C^{1,\alpha}$ then the conformality relations
are satisfied everywhere, and $\tau$ is a diffeomorphism.
\end{proposition}

We consider four cases.

(i)  $\Sigma_1$ and $\Sigma_2$ are compact surfaces without
boundary.  The theorem is well-known, since every harmonic
homeomorphism is a diffeomorphism and consequently it is
bi-Lipschitz.

(ii) $\Sigma_1$ and $\Sigma_2$ are conformally equivalent to the
unit disk. Then for $i =1,2$ there exists a conformal mapping
$\tau_i: \Bbb U\to \Sigma_i$. Let $w$ be $K$-quasiconformal. Take
$\hat w=\tau_2^{-1}\circ w\circ\tau_1$. Let us show that $\hat w$ is
a harmonic
 mapping of the unit disk onto itself with an approximate
analytic metric. First of all
$$\tau_2' \hat w _z = \partial w \tau_1',$$
$$\tau_2' \hat w _{\bar z} = \bar\partial w \overline{\tau_1'},$$
and
$$\tau_2'' \hat w_z\cdot \hat w_{\bar z} + \tau_2' \hat w_{z\bar z} = \partial\bar\partial w |\tau_1'|^2.$$ Thus

\begin{equation}\label{s2.tran1}
\frac{\hat w_{z\bar z}}{\hat w_z\cdot \hat w_{\bar z}}
=-\frac{\tau_2''}{(h_2')^2}+\tau_2' \frac{\partial\bar\partial
w}{\partial w\cdot\bar\partial
w}=-\frac{\tau_2''}{(\tau_2')^2}-2\tau_2'
\frac{\partial\sigma_2}{\sigma_2}.
\end{equation}
By using \eqref{double}, it follows that the coefficients of the
metric tensor

$$g_{ij} = \left\{
                  \begin{array}{ll}
                  \rho(z), & \hbox{if $i=j$;} \\
                  0, & \hbox{if $i\neq j$}
                 \end{array}
                        \right.,
              $$
satisfy the condition of Proposition~\ref{thei}. Therefore
$|\tau_2''|$, $|\tau_2'|$ and $\frac{1}{|\tau_2'|^2}$ are bounded.
Thus $\hat w$ is a quasiconformal harmonic mapping with respect to
an approximate analytic metric. Theorem~\ref{tt} implies that $\hat
w$ is bi-Lipschitz with respect to Euclidean metric. Since $\tau_1$
and $\tau_2$ are diffeomorphisms up to the boundaries, the mapping
is bi-Lipschitz as well. It follows that $w$ is bi-Lipschitz (with
respect to internal metrics). Therefore  $a) \Rightarrow b)$.

Since every Euclidean bi-Lipschitz is quasiconformal, according to
the previous facts we obtain $b) \Rightarrow a)$.

(iii) $\Sigma_1$ and $\Sigma_2$ are homeomorphic to a plane domain
$G$ bounded by $k$ circles. Let $\tau_1\colon B \to \Sigma_1$ and
$\tau_2\colon B\to \Sigma_2$ be conformal mappings produced in
Proposition~\ref{thei}, where $B$ is a plane domain bounded by $k$
circles. Take $ \hat w=\tau_2^{-1}\circ w\circ\tau_1$.

For every boundary point $t\in\partial \Sigma_1$, there exists a
neighborhood $B(t)\subset D$ of $s=\tau_1^{-1}(t)$ (with respect to
the boundary of $D$), which is conformally equivalent to the unit
disk. Let $\tau_3\colon \Bbb U \to B(t)$ and $\tau_4\colon \Bbb U\to
\hat w(B(t))$ be Riemann conformal mappings. Take now $\hat w_t =
\tau_4^{-1}\circ\hat w\circ \tau_3$. According to the case (ii),
there exists  a positive constant $C_t(K)$ such that:
$$1/C_t(K)\le  |\nabla \hat w_t(z)|\le C_t(K), z\in \Bbb U.$$
Using the Schwarz's reflexion principle to the mappings $\tau_3$ and
$\tau_4$  it follows that there exists a positive constant $C'_t(K)$
such that $$1/C'_t(K)\le |\nabla \hat w(z)|\le C'_t(K), z\in
B_t(K),$$ where $B_t(K)\subset B$ is a neighborhood of $s$. Since
$\tau_1$ and $\tau_2$ are diffeomorphisms, it follows that there
exists a constant $C''_t(K)$ such that $$1/C''_t(K)\le |\nabla
w(z)|\le C''_t(K), z\in \Sigma_t(K),$$ where $\Sigma_t(K)\subset
\Sigma_1$ is a neighborhood of $t$. Since $\partial \Sigma_1$ is
compact, there exists a positive constant $C'(K)$ such that
$$1/C'(K)\le |\nabla  w(z)|\le C'(K), z\in \Sigma(K),$$ where
$\Sigma(K)$ is a neighborhood of $\partial \Sigma_1$. Finally we
conclude that there exists a positive constant $C''(K)$ such that
$$1/C''(K)\le |\nabla  w(z)|\le C''(K), z\in \Sigma_1.$$  The
conclusion follows from the relations $$|\nabla
w^{-1}(w(t))|=\frac{1}{l(\nabla w(t))},$$
$$|\nabla w|\le K l(\nabla(w)),$$ the main value theorem and the
fact that the surfaces are quasi-convex.

(iv) The general case. Let $\gamma$ be one of the boundary
components of $\Sigma_1$. Then $\delta=w(\gamma)$ is a boundary
component of $\Sigma_2$. Assume that $\gamma'\subset
\Sigma_1\setminus \gamma$ is a $C^{2,\alpha}$ Jordan curve homotopic
to $\gamma$. Then $\delta'=w(\gamma')\in C^{2,\alpha}$ is homotopic
to $\delta$. Let $A\subset \Sigma_1$ be the annulus generated by
$\gamma$ and $\gamma'$. Applying the case (iii) to the mapping
$w:A\to w(A)$ we obtain the desired conclusion.

\end{proof}

\begin{remark}

Let $\lambda$ be the hyperbolic metric defined in \eqref{hiperbola}.
In \cite[Theorem~13]{wan} is proved that a $\lambda$ harmonic
self-mapping of the unit disk is q.c. if and only if the function
$$\Psi= \frac{(1-|z|^2)^2w_z\overline{w_{\bar z}}}{(1-|w(z)|^2)^2}$$
is bounded.  Moreover, concerning the hyperbolic metric, Wan showed
that if $w$ is $k$-q.c. $\lambda$ harmonic, then it is a hyperbolic
bi-Lipschitz self-mapping of the unit disk. See also \cite{MMM}.

The previous method gives a short proof of the theorem, that a q.c.
harmonic mapping of the hyperbolic disk onto itself is bi-Lipschitz
(one direction of Wan's theorem).

To do so, denote by $e(w)$ the hyperbolic energy of a q.c. harmonic
mapping of the unit disk onto itself:

$$e(w) = \frac{(1-|z|^2)^2}{(1-|w(z)|^2)^2}(|w_z|^2+|w_{\bar z}|^2).$$

Assume there exists a sequence $(z_n)$ such that $e(w)(z_n)\to
\infty$, or $e(w)(z_n) \to 0$, as $n\to \infty$.  Take $w_n =
p_n(w(q_n(z)))$, where $p_n$ and $q_n$ are M\"obius transformations
of the unit disk onto itself satisfying the conditions $p_n(w(z_n))
= 0$ and $q_n(0)=z_n$. Then, $w_n(0)=0$ and up to some subsequence,
$w_n \to w_0$ where $w_0$ is quasiconformal and harmonic. By
\cite{sy} $\nabla w_0(0)\neq 0$.

But here we have \[\begin{split}2|\nabla w_0(0)|_2^2 &=\lim_{n\to
\infty}2|\nabla w_n(0)|_2^2
\\&=\frac{(1-|z_n|^2)^2}{(1-|w(z_n)|^2)^2}(|w_z(z_n)|^2+|w_{\bar
z}(z_n)|^2) \\&= \lim_{n\to \infty} e(w)(z_n)=\infty\ \ \text {or}\
\ = 0.\end{split}\]  This is a contradiction. Therefore there exists
a constant $C\ge 1$ such that
$$\frac 1C\lambda(z)|dz|\le w^*(\lambda (z) |dz|)\le C \lambda(z) |dz|$$ as
desired.

\end{remark}

\subsection{Open problems} a)
It is not known by the author if the mapping $w_0$ produced in
Corollary~\ref{coro} is bi-Lipschitz with respect to the Euclidean
metric or what is a bit more, whether $\rho_0$ is an approximate
analytic metric. This problem is open even for $\rho$ being a
Euclidean metric. Also it is an interesting question if two
sequences $z_n$ and $z_n'$ converges to the same boundary point
$z_0$, do they induce the same harmonic mapping $w_0$ or at least
the same metric $\rho_0$.

b) The Gauss curvature of a metric $\rho$ is given by $$K =
-\frac{\Delta \log \rho}{\rho^2}.$$ Thus the Gauss curvature is
positive if and only if \begin{equation}\label{hhee}\Delta \rho(z)
\le \frac{1}{\rho(z)}|\nabla \rho(z)|^2,\ \ z\in\Bbb
U.\end{equation} Heinz-Bersnetin theorem (\cite{EH}) states that: if
\begin{equation}\label{he}|\Delta \rho(z)| \le \frac{1}{\rho(z)}|\nabla \rho(z)|^2,\ \ z\in\Bbb U\end{equation}
and $\rho\in C^{1,\alpha}(S^1)$ then $|\nabla \rho|$ is bounded,
provided that $\rho(z)$ is bounded below away from zero. Therefore
$\rho$ is an approximate analytic metric.

Under these conditions on the metric $\rho$ the main theorem is
true. The question arises whether the condition \eqref{he} can be
replaced by \eqref{hhee}.

\subsection*{Acknowledgment} I thank the referee for pointing out an error
in the previous version of the paper.
%
\bibliographystyle{amsplain}
\providecommand{\bysame}{\leavevmode\hbox
to3em{\hrulefill}\thinspace}
\providecommand{\MR}{\relax\ifhmode\unskip\space\fi MR }
\providecommand{\MRhref}[2]{%
  \href{http://www.ams.org/mathscinet-getitem?mr=#1}{#2}
} \providecommand{\href}[2]{#2}

\end{document}